\newtheorem{theorem}{Theorem}
\newtheorem{lemma}{Lemma}
\theoremstyle{remark}
\numberwithin{equation}{section}
\title{Nonlinear Eigen-approach ADMM for Sparse Optimization on Stiefel Manifold}
\author[1]{Jiawei Wang}
\author[2]{Rencang Li}
\author[1]{Richard Yi Da Xu}
\affil[1]{Department of Mathematics, Hong Kong Baptist University}
\affil[2]{Department of Mathematics, University of Texas at Arlington}
\date{May 28, 2023}
\begin{document}

\maketitle

\begin{abstract}
With the growing interest and applications in machine learning and data science, finding an efficient method to sparse analysis the high-dimensional data and optimizing a dimension reduction model to extract lower dimensional features has becoming more and more important. Orthogonal constraints (Stiefel manifold) is a commonly met constraint in these applications, and the sparsity is usually enforced through the element-wise L1 norm. Many applications can be found on optimization over Stiefel manifold within the area of physics and machine learning. In this paper, we propose a novel idea by tackling the Stiefel manifold through an nonlinear eigen-approach by first using ADMM to split the problem into smooth optimization over manifold and convex non-smooth optimization, and then transforming the former into the form of  nonlinear eigenvalue problem with eigenvector dependency (NEPv) which is solved by self-consistent field (SCF) iteration, and the latter can be found to have an closed-form solution through proximal gradient. Compared with existing methods, our proposed algorithm takes the advantage of specific structure of the objective function, and has efficient convergence results under mild assumptions.

\vspace{3mm}
\noindent{\textbf{Keywords: }Stiefel manifold, Nonlinear eigenvalue problem with eigenvector dependency (NEPv), self-consistent field (SCF) iteration, ADMM, proximal algorithm, orthogonal constraints}
\end{abstract}

\section{Introduction}
\subsection{Problem Description}
In this paper, we are concerned with the following non-smooth composite minimization problem
\begin{equation}\label{eq1.1}
\begin{split}
     \min_{X} & \; \; f(X) + r(X)\\
     \mathrm{ s.t.} & \; \; X\in\mathcal{S}_{n,p}
\end{split}
\end{equation}
where \(\mathcal{S}_{n,p}\) stands for the Stiefel manifold \(\mathcal{S}_{n,p} = \{X\in\mathbb{R}^{n\times p}\mid X^{\top}X=I_p\)\}, \(f:\mathbb{R}^{n\times p} \mapsto \mathbb{R}\) needs to satisfy the following Assumption 1.1, and \(r:\mathbb{R}^{n\times p} \mapsto \mathbb{R}\) represents the convex but possibly non-smooth function whose proximal operator can be easily evaluated.   

\newtheorem{assumption}{Assumption}[section]
\begin{assumption}
\(f(X)\) is differentiable and possibly non-convex, and \(\nabla f(X)\) is Lipschitz continuous with Lipschitz constant \(L_{f}\).
\end{assumption}

In many applications, \(r(\cdot)\) is usually being taken as the \( \ell_{1} \)-related regularization term to enforce sparsity, and the frequently encountered ones include for example the element-wise matrix \( \ell_{1} \)-norm and the \( \ell_{m,1} \)-norm 

\begin{equation}
  r(X) \coloneqq
    \begin{cases}
      \mu \Vert X\Vert_{1} = \sum_{ij}\mu| X_{ij}| \\
      \Vert \Gamma X\Vert_{m,1} = \sum_{i=1}^{n}\gamma_{i}\Vert X_{i\cdot}\Vert_{m}
    \end{cases}       
\end{equation}
where \(\mu>0\), \(\gamma_{i}\geq 0\) are weighting parameters and \(\Vert \cdot \Vert_{m} \) denotes the \( \ell_{m} \) vector norm for \(X_{i\cdot}\), the \(i\)th row of matrix \(X\).

\subsection{Existing Methods} 
Optimization on Stiefel manifold has been a hot research topic in the past few years. \cite{r17} gives a very comprehensive and structured study on optimization method over smooth manifold. However, most of the existing methods \cite{r24}\cite{r25}\cite{r31}\cite{r26}\cite{r28} are designed for smooth objective function by using the derivative of objective function and thus cannot be applied to the non-smooth objective function such as problem (\ref{eq1.1}).

Due to the non-convexity, non-smoothness as well as the manifold constraint, algorithms that can be applied here is limited, and the existing ones can be basically divided into two categories. The first type focuses on using Riemannian algorithm to handle the manifold constraint \cite{r18}\cite{r33}\cite{r21}\cite{r1}. Riemannian subgradient (\ref{eq2.1}) is used in \cite{r18}\cite{r33} to deal with non-smoothness, 
\begin{equation}\label{eq2.1}
    X_{k} = {\rm Retr}_{X_{k-1}}(-\alpha_{k-1}\nu_{k-1})
\end{equation}
where \(\alpha_{k-1}\) is the stepsize, \(\nu_{k-1}\) denotes the Riemannian subgradient, and Retr is the retraction mapping. However, the efficiency of these proposed algorithm is not ideal as the subgradient method has relatively slow convergence rate, and the method is too general where it does not utilize the unique properties from the problem. The idea of MADMM \cite{r21} is relatively close to our algorithm where the variable splitting and ADMM is applied first to divide the original minimization into two sub-problems, but then an inner algorithm of Riemannian gradient is used to solve the smooth manifold constraint sub-problem. This splitting technique is not new and it is first proposed in \cite{r20} by Lai and Osher where it is called splitting orthogonal constraints (SOC). However, there are no convergence guarantees for both SOC based method in \cite{r20} as well as MADMM. Chen et al. \cite{r1} proposed an retraction-based proximal gradient called manifold proximal gradient method (ManPG), 
where the proximal method is extended from the Euclidean setting to manifold setting. In ManPG, after computing the proximal algorithm on tangent space by an inner semi-smooth Newton algorithm, a retraction is then adopted to map the iteration points back to the smooth manifold. Here, the computational cost for solving the proximal mapping sub-problem is high, which may not be efficient in certain problems.   

The methods within the second category apply the Lagrangian based approach (e.g. augmented Lagrangian multiplier method) or indicator function on the orthogonal constraint to transform the minimization (\ref{eq1.1}) unconditional. In \cite{r4}, Chen et al. proposed an proximal alternating minimized augmented Lagrangian (PAMAL) method to solve some specific \(\ell_{1}\)-regularized optimization problem such as the compressed modes \cite{r34}\cite{r35} in physics or the feature selection problem \cite{r36}\cite{r37} with minor modifications. The PAMAL is based on an proximal regularized three-block Gauss-Seidel method applied to the augmented Lagrangian function \(\ref{eq2.2}\) after denoting the orthogonal constraint using an indicator function and splitting the variable twice.
\begin{equation}\label{eq2.2}
\begin{split}
    \mathcal{L}(X,Y,Z,\Lambda^{1},\Lambda^{2};\beta) = & {\rm trace}(X^{T}HX) + \frac{1}{\mu}\Vert Y \Vert_{1} + \delta_{\mathcal{S}}(Z) + \langle \Lambda^{1}, Y - X \rangle \\
    & + \frac{\beta}{2}\Vert Y - X \Vert_{F}^{2} + \langle \Lambda^{2}, Z - X \rangle + \frac{\beta}{2}\Vert Z - X \Vert_{F}^{2}
\end{split}
\end{equation}
where \(X, Y, Z, \Lambda^{i} \in \mathbb{R}^{n \times p}\) for \(i = 1,2\), \(\mu, \beta > 0\) and \(H\) is the (discrete) Hamiltonian. The update of the primal variable \(X_{k},Y_{k},Z_{k}\) is up to some predefined tolerance level \(\Vert \Theta_{k} \Vert _{\infty} \leq \epsilon_{k}, \Theta_{k} \in \partial \mathcal{L}(X_{k},Y_{k},Z_{k},\Lambda^{1}_{k},\Lambda^{2}_{k};\beta_{k})\). Apart from the relatively complicated algorithm structure, the result of PAMAL is subjected and sensitive to its parameters settings like the three proximal parameters.
When \(r(\cdot)\) within (\ref{eq1.1}) is the \(\ell_{m,1}\) norm (\(m > 1\)) , \cite{r6} proposes an proximal gradient method for penalty models with a compact and convex constraints (PenCPG) based on using augmented Lagrangian method on orthogonal constraint and instead of dual ascent, the authors in \cite{r6} express the Lagrange multiplier through an close-form expression of the primal variable:
\begin{equation}\label{eq2.3}
    \min_{X\in\mathcal{M}} g(X) := f(X) + r(X) - \frac{1}{2} \langle \Lambda(X), X^{T}X - I_{p} \rangle + \frac{\beta}{4} \Vert X^{T}X - I_{p} \Vert_{F}^{2} 
\end{equation}
where \(\mathcal{M} \subseteq \mathbb{R}^{n \times p}\) denotes a compact convex set includes the Stiefel manifold. Although this method gets rid of the dual variable and utilize the fast convergence rate of proximal algorithm, the limitation is that the closed-form expression for \(\Lambda\) only exist for \(r(\cdot)\) being \(\ell_{m,1}\) norm (\(m > 1\)).

\subsection{Motivation and Contribution}
Inspired by the novel idea of nonlinear eigenvalue problem with eigenvector dependency proposed by the authors in \cite{r30}, algorithms based on the NEPv formulation and then SCF iteration are designed to solve the problems like \(\theta\)-trace ratio \cite{r28} and orthogonal canonical correlation analysis (OCCA) \cite{r29} which are defined on Stiefel manifold arise. However, the partial derivative of the objective function and its corresponding gradient on manifold are required during the process of transforming the first-order optimality condition (KKT) into the NEPv structure, which means the problem considered needs to be smooth. Therefore, ADMM is considered on problem (\ref{eq1.1}) to split the difficulty of the original minimization for which the smoothness condition for the construction of NEPv can then be satisfied. 

Many of the optimization problems in machine learning defined over Stiefel manifold (e.g. principal component analysis (PCA), orthogonal linear discriminant analysis (OLDA), canonical correlations analysis (CCA) etc.) are aim at (linear) dimensionality reduction, which is the cornerstone when studying the high dimensional data, and the related function can be expressed through the matrix trace function. The \(p\) in problem (\ref{eq1.1}) is usually referred to as the number of selected features and is a very small number compared to \(n\) (\(p \ll n\)), the original dimensionality from data points. The NEPv approach from our NEPvADMM can be designed to better take advantage of these circumstances as well as the structure of each specific objective function to further lower the computational cost than the Riemannian gradient descent method from MADMM.  

\subsection{Notation and Organization}
In this paper, \(\mathbb{R}^{n \times p}\) is the set of \(n \times p\) real matrices, \(\mathbb{S}^{n \times n}\) denotes all \(n \times n\) real symmetric matrices. \(I_{n}\) is the identity matrix and \(X^{T}\) is the transpose of a matrix. The Euclidean inner product of \(X,Y \in \mathbb{R}^{n \times p}\) is defined through the trace function \(\langle X, Y \rangle = {\rm tr}(X^{T}Y)\), where \({\rm tr}(H) = \sum_{i}H_{ii}\) for \(H \in \mathbb{R}^{n \times n}\). Here we will use \(\lambda_{i}(A)\) to denote the \(i\)-th largest eigenvalue of matrix \(A \in \mathbb{R}^{n \times n}\) and \(\sigma_{i}(A)\) being its \(i\)-th largest singular value. 
$$ \Vert A \Vert_{F} = (\sum_{i,j}A^{2}_{i,j})^{\frac{1}{2}} = (\sum_{j}\sigma^{2}_{j})^{\frac{1}{2}},$$
$$ \Vert A \Vert_{2} = \max_{j} \sigma_{j},$$ 
$$ \Vert A \Vert_{\rm trace} = \sum_{j} \sigma_{j},$$
denote the Frobenius norm, matrix spectral norm, and matrix trace norm respectively. Finally, \(A \in \mathbb{S}^{n \times n} \succeq (\succ) 0\) means the matrix is positive semi-definite (positive definite).  

The rest of the paper is organized as follows. In Section 2, we will explain the methodology of the NEPv ADMM method in detail and provide its algorithm. And in Section 3, real-life applications will be introduced and relative numerical experiments are implemented. Finally, we draw the conclusion in Section 4.  

\section{NEPv ADMM}
The objective function \ref{eq1.1} is a combination of a smooth (possibly non-convex) function and a convex non-smooth function, it is natural to think of using alternating direction method of multipliers (ADMM) to split the difficulty of the original problem and handle each sub-problem separately, where one is smooth optimization on manifold and the other is simply convex optimization. 

First, we split the variables and transform the original problem \ref{eq1.1} into an equivalent one
\begin{equation}\label{eq3.1}
\begin{split}
     \min_{X,Y} & \;\; -\phi(X) + r(Y)\\
     \mathrm{ s.t.}\;\;& X\in\mathcal{S}_{n,p}, Y\in\mathbb{R}^{n\times p}, X = Y
\end{split}
\end{equation}

where \(\phi(X) = -f(X)\).

The augmented Lagrangian function is of \ref{eq3.1} is then 
\begin{equation}\label{eq3.2}
     \mathcal{L}_{\beta}(X,Y; \Lambda) = -\phi(X) + r(Y) + \langle\Lambda,X-Y\rangle + \frac{\beta}{2}\Vert X-Y\Vert_F^2 ,
\end{equation}

Applying ADMM on the augmented Lagrangian function and update each variable iteratively
\begin{equation}\label{eq3.3}
\begin{split}
    X_{k+1} & := \mathop{\arg\max}\limits_{X\in\mathcal{M}} -\mathcal{L}_{\beta}(X,Y_{k}; \Lambda_{k}) \\
    Y_{k+1} & := \mathop{\arg\min}\limits_{Y} \mathcal{L}_{\beta}(X_{k+1},Y; \Lambda_{k}) \\
    \Lambda_{k+1} & := \Lambda_{k} + \beta (X_{k+1} - Y_{k+1}).
\end{split}
\end{equation}

From \ref{eq3.3}, due to the existence of the Stiefel manifold constraints, the X sub-problem is an optimization on smooth manifold. Instead of using the commonly applied methods such as Riemannian gradient descent, we apply a more customized nonlinear eigenvalue problem with eigenvector dependency (NEPv) approach to tackle it which is more efficient as \(f(X)\) within \ref{eq1.1} is usually about dimensionality reduction. The general idea of the NEPv approach is to get the first order optimality condition by taking the gradient of the objection function first, and then transform the equality into the form of nonlinear eigenvalue problem with eigenvector dependency and followed by using the self-consistent field (SCF) iteration. 

Denoting \(g_{k}(X) = -\mathcal{L}_{\beta}(X,Y_{k}; \Lambda_{k})\) for simplicity, the gradient of \(g_{k}(X)\) on Stiefel manifold is 
\begin{equation}\label{eq3.4}
    {\rm grad} \; g_{k}(X) = \frac{\partial g_{k}(X)}{\partial X} - X {\rm sym}(X^{T}\frac{\partial g_{k}(X)}{\partial X})    
\end{equation}
where \({\rm sym}(A) = \frac{A + A^{T}}{2}\). 

Therefore, the KKT condition of the X sub-problem can be transformed into 
\begin{equation}\label{eq3.5}
    \frac{\partial g_{k}(X)}{\partial X} = X\hat{\Omega}
\end{equation}
where \(X\in \mathbb{O}^{n\times p}\) and \(\hat{\Omega}^{T} = \hat{\Omega} \in \mathbb{R}^{p\times p}\).

The partial derivative \(\frac{\partial g_{k}(X)}{\partial X}\) is computed by treating each entries within \(X\) independent from each other:
\begin{equation}\label{eq3.6}
    \frac{\partial g_{k}(X)}{\partial X} = \frac{\partial \phi(X)}{\partial X} - \Lambda_{k} - \beta(X - Y_{k})
\end{equation}

Here we do not have the specific form of \(\phi(X)\), thus we take the general method to transform the left hand side of \ref{eq3.5} into the form of NEPv \(H(X)X\) where \(H(X)\in \mathbb{R^{n\times n}}\) is a symmetric matrix
\begin{equation}\label{eq3.7}
\begin{split}
    H(X) & = \frac{\partial (g_{k}(X) + \beta X)}{\partial X}X^{T} + X(\frac{\partial (g_{k}(X) + \beta X)}{\partial X})^{T} - \beta I_{n} \\
    & = \frac{\partial \phi(X)}{\partial X}X^{T} + X(\frac{\partial \phi(X)}{\partial X})^{T} + D_{k}X^{T} + XD_{k}^{T} -\beta I_{n}
\end{split}
\end{equation}
where \(D_{k} = \beta Y_{k}-\Lambda_{k}\).

But special and simpler formulations are often available if the specific form of \(\phi(X)\) is known. For example, in sparse PCA, \(\phi(X) = \frac{1}{2}{\rm trace}(X^{T}AX)\) where \(A \succeq 0\), we can take \(H(X) = A - \beta I_{n} + D_{k}X^{T} + XD_{k}\). 

The NEPv of the X sub-problem takes the form
\begin{equation}\label{eq3.8}
    H(X)X = X\Omega, \; X \in \mathcal{S}_{n,p}, \; \Omega^{T} = \Omega \in \mathbb{R}^{p\times p}
\end{equation}

\begin{theorem}
\(X\in \mathcal{S}_{n,p}\) satisfies the first order optimality condition of \(\mathop{\max}\limits_{X\in\mathcal{S}_{n,p}} g_{k}(X)\) if and only if it is a solution of NEPv (\ref{eq3.8}) and \(X^{T}(D_{k} + \frac{\partial \phi(X)}{\partial X})\) is symmetric.
\end{theorem}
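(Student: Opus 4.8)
The plan is to observe that both sides of the claimed equivalence are really statements about the single $n\times p$ matrix $G := \frac{\partial \phi(X)}{\partial X} + D_k$, so the first step is to rewrite everything in terms of $G$. Inserting $D_k = \beta Y_k - \Lambda_k$ into \eqref{eq3.6} gives the Euclidean derivative $\frac{\partial g_k(X)}{\partial X} = G - \beta X$. By \eqref{eq3.4} and \eqref{eq3.5} the first-order optimality condition $\mathrm{grad}\, g_k(X) = 0$ is equivalent to $\frac{\partial g_k(X)}{\partial X} = X\hat\Omega$ with $\hat\Omega = \hat\Omega^T$; substituting the rewritten derivative, this becomes $G = X(\hat\Omega + \beta I_p) = X\Psi$ for a symmetric $\Psi$. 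Since $X^TX = I_p$, any such $\Psi$ is forced to equal $X^TG$, so I would record the clean reformulation: optimality holds if and only if (i) $G$ lies in the column space of $X$, i.e.\ $G = X(X^TG)$, and (ii) $X^TG = X^T\!\left(D_k + \frac{\partial\phi(X)}{\partial X}\right)$ is symmetric.

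Next I would show that the NEPv \eqref{eq3.8} is exactly condition (i). Expanding $H(X)X$ from \eqref{eq3.7} and using $X^TX = I_p$ gives
\[
 H(X)X = G + X(X^TG)^T - \beta X ,
\]
where the last two terms already lie in the column space of $X$. Because $H(X)$ is symmetric by construction, $\Omega = X^TH(X)X$ is automatically symmetric, so the requirement ``$\Omega$ symmetric'' in \eqref{eq3.8} imposes no extra constraint; the NEPv is solvable precisely when the component of $H(X)X$ orthogonal to the range of $X$ vanishes, which happens if and only if $G = X(X^TG)$. Hence the NEPv is equivalent to (i) alone.

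Combining the two observations closes both directions. For the forward implication, optimality gives $G = X\Psi$ with $\Psi$ symmetric; then $X^TG = \Psi$ is symmetric, which is (ii), and $H(X)X = X\Psi + X\Psi^T - \beta X = X(2\Psi - \beta I_p)$, so $X$ solves \eqref{eq3.8} with $\Omega = 2\Psi - \beta I_p$. For the converse, assuming \eqref{eq3.8} together with the symmetry $(X^TG)^T = X^TG$, I substitute into the displayed expression to get $G = X(\Omega - X^TG + \beta I_p)$, and since $\Omega$, $X^TG$ and $\beta I_p$ are all symmetric the bracket is symmetric, which is precisely $G = X\Psi$ with $\Psi$ symmetric, i.e.\ optimality.

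The algebra is routine once the reformulation through $G$ is set up; the step I expect to be the crux—and the one I would state most carefully—is the second paragraph, namely explaining why the symmetry of $X^TG$ cannot be absorbed into the NEPv. The point is that right-multiplying $H(X)$ onto $X$ symmetrizes the system: it forces $\Omega = X^TH(X)X$ to be symmetric for \emph{every} $X\in\mathcal{S}_{n,p}$, so the NEPv retains only the range information (i) and discards the symmetry information (ii) carried by the original optimality equation. Making this loss precise is what justifies reinstating ``$X^T(D_k + \frac{\partial\phi(X)}{\partial X})$ symmetric'' as an independent hypothesis, and it is the conceptual heart of the statement.
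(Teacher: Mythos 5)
The paper states this theorem \emph{without proof} --- it appears immediately after \eqref{eq3.8} and the text moves directly on to the NEPv ansatz --- so there is no authors' argument to compare yours against; your proposal has to stand on its own, and it does. Writing \(G = \frac{\partial \phi(X)}{\partial X} + D_k\), your three reductions are all correct: (a) from \eqref{eq3.6}, \(\frac{\partial g_k(X)}{\partial X} = G - \beta X\), so the stationarity condition \eqref{eq3.5} is equivalent to \(G = X\Psi\) with \(\Psi = \hat{\Omega} + \beta I_p\) symmetric, and necessarily \(\Psi = X^T G\) because \(X^T X = I_p\); (b) from \eqref{eq3.7}, \(H(X) = GX^T + XG^T - \beta I_n\), hence \(H(X)X = G + X(X^TG)^T - \beta X\), and since \(H(X)\) is symmetric, any \(\Omega\) satisfying \(H(X)X = X\Omega\) must equal \(X^T H(X) X\) and is automatically symmetric, so \eqref{eq3.8} holds if and only if \((I_n - XX^T)H(X)X = (I_n - XX^T)G = 0\), i.e.\ if and only if \(G = X(X^TG)\); (c) the two implications then follow by exactly the algebra you give, with every bracket symmetric where you claim it is. Your final paragraph also isolates the right conceptual point, and it is worth making it fully concrete: the symmetrization \(N \mapsto NX^T + XN^T\) used to build \(H\) destroys the skew-symmetric part of \(X^T N\), so \eqref{eq3.8} retains only the range condition (i). Indeed, taking \(G = XS\) with \(S\) nonsymmetric shows the gap: \eqref{eq3.8} holds with \(\Omega = S + S^T - \beta I_p\), yet \(X\) is not a stationary point, which is precisely why the symmetry of \(X^T(D_k + \frac{\partial \phi(X)}{\partial X})\) must appear as an independent hypothesis. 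In short, your argument is correct and supplies a proof the paper omits.
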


To make sure the monotonic increase of each iteration step with the X sub-problem, we assume the NEPv Ansatz holds, meaning for \(\hat{X} \in \mathcal{S}_{n,p}\), \(X \in \mathbb{X} \subseteq \mathcal{S}_{n,p}\), if 
\begin{equation}\label{eq3.9}
    {\rm trace}(\hat{X}^{T}H(X)\hat{X}) \geq {\rm trace}(X^{T}H(X)X) + \eta \; \; \; {\rm for \; some} \; \eta \in \mathbb{R} 
\end{equation}

Then there exist \(Q\in\mathcal{S}_{p,p}\) with \(\Tilde{X} = \hat{X}Q \in \mathbb{X}\) such that \(g_{k}(\Tilde{X}) \geq g_{k}(X) + \omega\eta\).

Next, the SCF iteration described in Algorithm 1 can be used

\begin{algorithm}
\caption{Self-Consistent Field Iteration}\label{alg1}
\begin{algorithmic}
\Require \(X_{0} \in \mathbb{X}\), set \(k = 0\)
\While{certain stopping criterion is not reached}

Construct \(H_{k} = H(X_{k})\);

Compute orthonormal eigenbasis matrix \(\hat{X}_{k+1}\) associated with the \(p\) largest
eigenvalues of \(H_{k}\);

Compute \(Q_{k+1} \in \mathbb{X}\) and set \(X_{k+1} = \hat{X}_{k+1}Q_{k+1} \in \mathbb{X}\);

Set \(k \coloneqq k+1\). 

\EndWhile
\State Return the last \(X_{k}\) as a maximizer of \(g_{k}(X)\).
\end{algorithmic}
\end{algorithm}

Since the exact optimizer of X sub-problem is hard to solve, we proposed to replace it by only one SCF iteration in the X sub-problem. 

The Y sub-problem is a convex optimization, and it has a closed-form solution through proximal gradient method.

\begin{lemma}
    The Y sub-problem has closed-form solution \(Y_{k+1} = {\rm Prox}_{\frac{r(\cdot)}{\beta}} (X_{k+1} + \frac{\Lambda_{k}}{\beta})\), and we have \(\Lambda_{k+1} \in \partial r(Y_{k+1})\). For \(r(\cdot)\) being the element-wise \(\mathcal{L}_{1}\)-norm, the proximal operator can be solved by soft-thresholding function. 
\end{lemma}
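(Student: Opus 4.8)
The plan is to reduce the $Y$ sub-problem to a single proximal evaluation by completing the square, then read the dual inclusion directly off the first-order optimality condition, and finally specialize to the $\ell_1$ case. First I would isolate the $Y$-dependent part of $\mathcal{L}_{\beta}(X_{k+1},Y;\Lambda_{k})$. Since $-\phi(X_{k+1})$ does not depend on $Y$, minimizing $\mathcal{L}_{\beta}$ over $Y$ is equivalent to minimizing
$$r(Y) + \langle \Lambda_{k}, X_{k+1} - Y \rangle + \frac{\beta}{2}\|X_{k+1} - Y\|_{F}^{2}.$$
The next step is to complete the square in $Y$: expanding the quadratic and the inner product and discarding the $Y$-free terms, one verifies that this equals
$$r(Y) + \frac{\beta}{2}\left\| Y - \left(X_{k+1} + \frac{\Lambda_{k}}{\beta}\right) \right\|_{F}^{2} + C,$$
where $C$ collects the constants. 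Dividing the objective by $\beta$ and invoking the definition of the proximal operator then yields $Y_{k+1} = \mathrm{Prox}_{r(\cdot)/\beta}\left(X_{k+1} + \Lambda_{k}/\beta\right)$, which is the claimed closed form.

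For the dual inclusion I would write down the optimality condition of the $Y$ sub-problem. Because $r$ is convex and the remaining terms are smooth, $0 \in \partial_{Y}\mathcal{L}_{\beta}(X_{k+1},Y_{k+1};\Lambda_{k})$ is both necessary and sufficient, and evaluating the subdifferential gives
$$0 \in \partial r(Y_{k+1}) - \Lambda_{k} + \beta(Y_{k+1} - X_{k+1}).$$
Rearranging shows $\Lambda_{k} + \beta(X_{k+1} - Y_{k+1}) \in \partial r(Y_{k+1})$, and recognizing the left-hand side as exactly the dual update $\Lambda_{k+1}$ from (\ref{eq3.3}) gives $\Lambda_{k+1} \in \partial r(Y_{k+1})$.

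Finally, for $r(X) = \mu\|X\|_{1}$ the proximal objective is separable across the entries $X_{ij}$, so the proximal map decouples into $np$ scalar problems of the form $\min_{y}\frac{\mu}{\beta}|y| + \frac{1}{2}(y - V_{ij})^{2}$ with $V = X_{k+1} + \Lambda_{k}/\beta$; each is solved by the soft-thresholding operator $y^{\star} = \mathrm{sign}(V_{ij})\max(|V_{ij}| - \mu/\beta, 0)$, which is what I would record. None of these steps presents a genuine obstacle, since the lemma is essentially the standard proximal reading of an ADMM subproblem; the only points demanding care are the algebraic completing-the-square step and the bookkeeping that identifies the rearranged optimality condition with the dual update, which is precisely what makes the inclusion $\Lambda_{k+1} \in \partial r(Y_{k+1})$ fall out without extra work.
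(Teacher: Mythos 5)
Your proposal is correct and follows essentially the same route as the paper: both reduce the $Y$ sub-problem to the first-order optimality condition $0 \in \partial r(Y_{k+1}) - \Lambda_{k} + \beta(Y_{k+1} - X_{k+1})$, identify the minimizer as $\mathrm{Prox}_{r(\cdot)/\beta}\bigl(X_{k+1} + \Lambda_{k}/\beta\bigr)$, and obtain $\Lambda_{k+1} \in \partial r(Y_{k+1})$ by rearranging that inclusion and recognizing the left-hand side as the dual update. The only differences are cosmetic: you derive the prox form by completing the square in the primal objective rather than reading it off the rescaled subdifferential inclusion as the paper does, and you spell out the entrywise soft-thresholding formula, which the paper asserts without computation.
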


\begin{proof}
The Y sub-problem:
\begin{equation}\label{eq3.10}
    Y_{k+1} := \mathop{\arg\min}\limits_{Y \in \mathbb{R}^{n \times p}} r(Y) + \langle \Lambda_{k} , X_{k+1} - Y \rangle + \frac{\beta}{2}\Vert X_{k+1} - Y \Vert_{F}^{2}
\end{equation}

It has optimality condition:
\begin{equation}\label{eq3.11}
\begin{split}
    0 & \in \partial r(Y_{k+1}) - \Lambda_{k} + \beta (Y_{k+1} - X_{k+1}) \\
    0 & \in \partial \frac{1}{\beta} r(Y_{k+1}) + Y_{k+1} - (X_{k+1} + \frac{1}{\beta}\Lambda_{k})
\end{split}
\end{equation}

which can be expressed through a one-step proximal gradient: \(Y_{k+1} = {\rm Prox}_{\frac{r(\cdot)}{\beta}} (X_{k+1} + \frac{\Lambda_{k}}{\beta})\).

Rearranging the terms of the first equation within (\ref{eq3.11}):
\begin{equation}\label{eq3.12}
\begin{split}
    \Lambda_{k} + \beta (X_{k+1} - Y_{k+1}) & \in \partial r(Y_{k+1}) \\
    \Lambda_{k+1} & \in \partial r(Y_{k+1})
\end{split}    
\end{equation}
\end{proof}

Our NEPv based ADMM can then be summarized by the following Algorithm 2, where we can see the purcedure is straightforward and no inner loops are included.

\begin{algorithm}
\caption{NEPv based ADMM Algorithm}\label{alg2}
\begin{algorithmic}
\Require \(X_{0} \in \mathbb{X} \subseteq \mathcal{S}_{n,p} \), \(Y_{0}\), \(\beta\), set \(k = 0\)
\While{certain stopping criterion is not reached}

Construct \(H_{k} = H(X_{k})\); 

Compute orthonormal eigenbasis matrix \(\hat{X}_{k+1}\) associated with the \(p\) largest eigenvalues of \(H_{k}\);

Compute \(Q_{k+1} \in \mathbb{X}\) and update \(X_{k+1} = \hat{X}_{k+1}Q_{k+1} \in \mathbb{X}\);

Update \(Y_{k+1} := {\rm Prox}_{\frac{r(\cdot)}{\beta}} (X_{k+1} + \frac{\Lambda_{k}}{\beta})\);

Update \(\Lambda_{k+1} := \Lambda_{k} + \beta (X_{k+1} - Y_{k+1})\);

Set \(k \coloneqq k+1\). 

\EndWhile
\end{algorithmic}
\end{algorithm}

For standard two-block ADMM algorithm which has already been proven to converge to the first-order optimality point, the two functions \(f(X)\) and \(r(X)\) from the composite objective function both need to be proper, closed convex functions, as well as each of the two sub-problems are optimized exactly. In our ADMM structure from Algorithm 2, the X sub-problem is non-convex and is an inexact optimization. Although several works have expanded the convergence condition of ADMM to some mild assumption on the objective function, none of them cover the type of the problem considered here.


\section{Numerical Experiments}
\subsection{Applications}
Many applications can be found suited to the problem formulation (\ref{eq1.1}) within the area of machine learning, data science, and physics. We list some of them in the following.

\textbf{Example 4.1 (sparse principle component analysis (sPCA))} 
Principal component analysis is one of the most widely used unsupervised learning method in machine learning for dimension reduction. It was first proposed by Pearson in 1901 \cite{r14} as a method to minimize the projection residuals, but nowadays it is often treated as finding a projection that can maximize the variance, and the two perspectives are equivalent. However, in traditional PCA, it is difficult to interpret the corresponding principal components as the each of them is a dense linear combination of all the original features \cite{r7}. Nowadays, massive amounts of data needs to be processed and these data are of high-dimensional (usually the number of features is larger than the number of data observations) in general, and the lack of feature selection has restricted the performance of traditional PCA in many applications such as the sparse coding of image processing \cite{r9}\cite{r10} as well as the natural language processing \cite{r8}. 

To address these issues by improving the interpretability as well as reducing the storage required \cite{r2}, sparse PCA was proposed using the idea that most of the variation in the data can be explained using only a small number of significant original features and thus the principal components are formed by sparse combinations instead. There are different ways to enforce sparsity, here we consider adding the \( \ell_{1} \)-regularization term to the original PCA objective function as it is generally easy to solve and can be useful in many applications \cite{r5}. The problem formulation is as follows (\(\ref{eq4.1}\)).

\begin{equation}\label{eq4.1}
\begin{split}
     \min_{X} & -\frac{1}{2}tr(X^{\top}AA^{\top}X) + \mu \Vert X \Vert_{1}\\
    & \mathrm{ s.t.}\;\;\; X\in\mathcal{S}_{n,p}
\end{split}
\end{equation}
where \(A\) is \(\mathbb{R}^{n\times m}\) data matrix and \(AA^{T} \succeq 0\). \(n\) denote the dimension of a single data, meaning the types of features, \(m\) is the amount of data observations, and \(\mu > 0\) here is a weighting parameter for the problem and (\ref{eq4.1}) reduces back to PCA if \(\mu = 0\) (the leading \(p\) eigenvalues of \(AA^{T}\)).

\textbf{Example 4.2 (orthogonal dictionary learning (ODL))}
Given the data matrix \(A \in \mathbb{n \times p}\) with the column vectors represent the data points \(\mathbf{a}_{i}, i = 1,...,p\) and \(p >> n\), the purpose of ODL is to find an orthogonal matrix \(X \in \mathbb{R}^{n \times n}\) so that \(A\) can be compactly represented \cite{r38}\cite{r39}\cite{r40}. The problem has the following generalized mathematical format:
\begin{equation}\label{eq4.2}
\begin{split}
    \min_X & \; \; \Vert A^{T}X \Vert_{1} \\
    {\rm s.t.} & \; \; X \in \mathcal{S}_{n,n}
\end{split}
\end{equation}
The \(A^{T}X\) from the objective function of (\ref{eq4.2}) can be seen as the correlation matrix, which we want it to be sparse by controlling the orthogonal basis matrix \(X\), and the element-wise \(\ell_1\) norm here is used to enforce the sparsity. Based on (\ref{eq4.2}), \cite{r44} introduces an application called robust subspace recovery (RSR), and the corresponding dual principal component pursuit (DPCP) approach for solving RSR can be found in \cite{r45}\cite{r46}\cite{r47}, where they transform the problem into:
\begin{equation}\label{eq4.3}
\begin{split}
    \min_{X} & \; \; \Vert A^{T}X \Vert_{1} \\
    s.t. & \; \; X \in \mathcal{S}_{n,p}
\end{split}
\end{equation}
where \(p = n - d\) and \(d\) is the dimensionality of the subspace spanned by the inlier point from the data matrix \(A\). 

\textbf{Example 4.3 (compressed modes in physics)}
For the independent-particle Schr\"{o}dinger equation about a finite system of electrons, the compressed modes problem is to compute a spatially localized solutions to its eigenvalue problem, where the sparsity is enforced through adding the \(\ell_{1}\) regularization term after the wave function. According to \cite{r48}, the problem has the following formulation:
\begin{equation}\label{eq4.4}
\begin{split}
    \min_{X \in \mathbb{R}^{n \times p}} & \; \; {\rm trace}(X^{T}HX) + \frac{1}{\mu} \Vert X \Vert_{1} \\
    {\rm s.t.} & \; \; X^{T}X = I_{p}
\end{split}
\end{equation}
where \(H \in \mathbb{R}^{n \times n}\) represents the (discrete) Hamiltonian, parameter \(\mu > 0\) is predefined for balancing the weight of sparsity.

\textbf{Example 4.4 (sparse spectral clustering)}
Spectral clustering (SC) is one of the most commonly used data clustering technique, and it requires to find out a lower dimensional embedding \(X\) of the original data through calculating the eigenvalues of the normalized Laplacian matrix first, then the final results can be derived through applying the K-means algorithm on \(X^{T}\). The sparse SC is an extension of SC by implementing the idea of sparse regularization into SC, and according to the authors from \cite{r41}, the mathematical expression of the problem can be formulated as following:
\begin{equation}\label{eq4.5}
\begin{split}
    \min_{Y \in \mathbb{R}^{n \times n}, X \in \mathbb{R}^{n \times p}} & \; \; \langle L, XX^{T} \rangle + g(Y) \\
    {\rm s.t.} & \; \; Y = XX^{T}, X^{T}X = I.
\end{split}
\end{equation}
where \(L\) denotes the normalized Laplacian matrix and \(g : \mathbb{R}^{n \times n} \rightarrow \mathbb{R}\) is the sparse regularizer which may be selected differently depending on problem settings.   

\textbf{Example 4.5 (unsupervised feature selection)}
Feature selection (FS) is about getting rid of the noisy features and selecting a feature subset with much smaller dimensionality from some high-dimensional feature set, so that the researcher may have a better data representation. In contrast to supervised FS such as trace ratio and Fisher score, unsupervised FS is much more difficult to handle because of its lack of label information. By adopting the assumption that a linear classifier can be used to predict the class label of input data, \cite{r37} incorporates the \(\ell_{2,1}\) norm and discriminative analysis to model the unsupervised FS as a manifold optimization problem in the form of problem (\ref{eq1.1}):
\begin{equation}\label{eq4.6}
\begin{split}
    \min_{W} & \; \; {\rm trace}(W^{T}MW) + \gamma \Vert W \Vert_{2,1} \\
    {\rm s.t.} & \; \; W \in \mathcal{S}_{n,p}
\end{split}
\end{equation}
where \(M \in \mathbb{S}^{n \times n}\) is calculated from the input data.  

\subsection{Numerical Experiments on sparse PCA}
In this section, the efficiency of our NEPvADMM will be illustrated and presented through conducting numerical experiments on the problem of sparse PCA. Firstly, the test problem and the basic setting such as the stopping criterion and default parameter values will be explained. Next, we will compare the numerical performances between our algorithm with the existing ones and draw some conclusions. All the algorithms are performed in MATLAB R2022b, and the experiments are conducted on a laptop with Intel Core i5-1035G1 CPU @ 1.00GHz and 8G of RAM. 

\subsubsection{Experimental Settings}
To implement the NEPvADMM, the variables within the original sparse PCA (\ref{eq4.1}) are splitted and transformed to
\begin{equation}\label{eq4.7}
\begin{split}
    \min_{X,Y} & \; \; -\frac{1}{2}{\rm trace}(X^{T}MX) + \mu \Vert Y \Vert_{1} \\
    {\rm s.t.} & \; \; X = Y, X \in \mathcal{S}_{n,p}
\end{split}
\end{equation}
where \(M = A^{T}A \in \mathbb{R}^{n \times n}\). The augmented Lagrangian function of (\ref{eq4.7}) is 
\begin{equation}\label{eq4.8}
     \mathcal{L}_{\beta}(X,Y; \Lambda) = -\frac{1}{2}{\rm trace}(X^{T}MX) + \mu \Vert Y \Vert_{1} + \langle\Lambda,X-Y\rangle + \frac{\beta}{2}\Vert X-Y\Vert_F^2 ,
\end{equation}
After applying the ADMM (\ref{eq3.3}) to (\ref{eq4.8}), we need to check the assumption that the function value (\ref{eq4.9}) from \(X\)-subproblem is monotonic increasing under the SCF iteration
\begin{equation}\label{eq4.9}
    g_{k}(X) = \frac{1}{2}{\rm trace}(X^{T}MX) - \langle \Lambda_{k}, X - Y_{k} \rangle - \frac{\beta}{2} \Vert X - Y_{k} \Vert_F^2
\end{equation}

The following Lemma tells us whether the NEPv ansatz holds for sparse PCA:
\begin{lemma}
    Given \(X \in \mathcal{S}_{n,p}\), the NEPv ansatz holds for the objective function (\ref{eq4.9}) of X-subproblem from sparse PCA. 
\end{lemma}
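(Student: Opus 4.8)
The plan is to reduce every quantity appearing in the ansatz (\ref{eq3.9}) to two elementary building blocks, the quadratic form \({\rm trace}(X^{T}MX)\) and the linear form \({\rm trace}(X^{T}D_{k})\), and then to exploit the orthogonal freedom carried by \(Q\in\mathcal{S}_{p,p}\). For sparse PCA we use \(H(X)=M-\beta I_{n}+D_{k}X^{T}+XD_{k}^{T}\) with \(D_{k}=\beta Y_{k}-\Lambda_{k}\). First I would record the identity, valid for every \(X\in\mathcal{S}_{n,p}\),
\begin{equation*}
    g_{k}(X)=\tfrac{1}{2}{\rm trace}(X^{T}MX)+{\rm trace}(X^{T}D_{k})+C ,
\end{equation*}
where \(C=\langle\Lambda_{k},Y_{k}\rangle-\tfrac{\beta p}{2}-\tfrac{\beta}{2}\Vert Y_{k}\Vert_{F}^{2}\) is independent of \(X\); this is obtained by expanding the squared term in (\ref{eq4.9}) and using \(\Vert X\Vert_{F}^{2}={\rm trace}(X^{T}X)=p\). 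The same \(X^{T}X=I_{p}\) identity collapses \(\tfrac{1}{2}{\rm trace}(X^{T}H(X)X)\) into the very same expression up to a constant, so the objective and its trace proxy genuinely agree.

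Next I would expand the two trace terms entering the hypothesis. Using \(\hat{X}^{T}\hat{X}=X^{T}X=I_{p}\) and cyclicity of the trace,
\begin{equation*}
    {\rm trace}(\hat{X}^{T}H(X)\hat{X})={\rm trace}(\hat{X}^{T}M\hat{X})-\beta p+2\,{\rm trace}(X^{T}\hat{X}\hat{X}^{T}D_{k}),
\end{equation*}
\begin{equation*}
    {\rm trace}(X^{T}H(X)X)={\rm trace}(X^{T}MX)-\beta p+2\,{\rm trace}(X^{T}D_{k}).
\end{equation*}
The decisive structural observation is that when \(\hat{X}\) is replaced by \(\tilde{X}=\hat{X}Q\) with \(Q\in\mathcal{S}_{p,p}\) the quadratic part is invariant, \({\rm trace}(\tilde{X}^{T}M\tilde{X})={\rm trace}(\hat{X}^{T}M\hat{X})\), while the linear part becomes \({\rm trace}(Q^{T}\hat{X}^{T}D_{k})=\langle Q,B\rangle\) with \(B:=\hat{X}^{T}D_{k}\in\mathbb{R}^{p\times p}\). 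Hence \(g_{k}(\tilde{X})=\tfrac{1}{2}{\rm trace}(\hat{X}^{T}M\hat{X})+\langle Q,B\rangle+C\), and all dependence on the rotation is concentrated in the single inner product \(\langle Q,B\rangle\).

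The crux is to choose \(Q\) so that \(\langle Q,B\rangle\) dominates the mixed term \({\rm trace}(X^{T}\hat{X}\hat{X}^{T}D_{k})=\langle \hat{X}^{T}X,B\rangle\) that the hypothesis produces. I would take \(Q=Q^{\star}=UV^{T}\), where \(B=U\Sigma V^{T}\) is a singular value decomposition (the orthogonal Procrustes maximizer), giving \(\langle Q^{\star},B\rangle=\Vert B\Vert_{\rm trace}\). Since \(\hat{X},X\in\mathcal{S}_{n,p}\), the matrix \(\hat{X}^{T}X\) is a contraction, \(\Vert\hat{X}^{T}X\Vert_{2}\le 1\), so the spectral--trace norm duality yields \(\langle \hat{X}^{T}X,B\rangle\le\Vert\hat{X}^{T}X\Vert_{2}\Vert B\Vert_{\rm trace}\le\Vert B\Vert_{\rm trace}=\langle Q^{\star},B\rangle\). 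Substituting the hypothesis \({\rm trace}(\hat{X}^{T}H(X)\hat{X})\ge{\rm trace}(X^{T}H(X)X)+\eta\) into the expansions above, the \(-\beta p\) terms cancel and this contraction bound closes the residual gap, so that \(g_{k}(\tilde{X})\ge g_{k}(X)+\tfrac{1}{2}\eta\); the ansatz therefore holds with \(\omega=\tfrac{1}{2}\).

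The main obstacle is precisely this bridging step: the hypothesis controls \({\rm trace}(\hat{X}^{T}H(X)\hat{X})\), which carries the awkward mixed term \(\langle\hat{X}^{T}X,B\rangle\) involving both iterates, whereas the quantity we must lower-bound, \(g_{k}(\hat{X}Q)\), carries \(\langle Q,B\rangle\) involving only \(\hat{X}\); reconciling the two is what forces the combined use of the Procrustes maximizer and the contractivity \(\Vert\hat{X}^{T}X\Vert_{2}\le 1\). I would close by noting that \(\tilde{X}=\hat{X}Q^{\star}\in\mathcal{S}_{n,p}\) holds automatically, so the only standing requirement is that \(\mathbb{X}\) be invariant under right multiplication by \(\mathcal{S}_{p,p}\), which is satisfied for \(\mathbb{X}=\mathcal{S}_{n,p}\).
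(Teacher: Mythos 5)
Your proof is correct and takes essentially the same route as the paper's: expand \({\rm trace}(\hat{X}^{T}H(X)\hat{X})\) and \({\rm trace}(X^{T}H(X)X)\) using orthonormality, bound the mixed term \(\langle \hat{X}^{T}X,\hat{X}^{T}D_{k}\rangle\) by \(\Vert \hat{X}^{T}D_{k}\Vert_{\rm trace}\) via von Neumann's trace inequality, attain that bound with the Procrustes choice \(Q=UV^{T}\) from the SVD of \(\hat{X}^{T}D_{k}\), and restore the constant terms to convert the trace inequality into \(g_{k}(\tilde{X})\geq g_{k}(X)\). Your write-up is in fact tidier than the paper's displayed chain (\ref{eq4.12})--(\ref{eq4.14}): you keep the factor of \(2\) on the \(D_{k}\) terms consistent, use the dimensionally correct symmetric form \(D_{k}X^{T}+XD_{k}^{T}\), and carry \(\eta\) through to an explicit \(\omega=\tfrac{1}{2}\), details the paper glosses over by effectively treating only the \(\eta=0\) case.
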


\begin{proof}
    First, we need to transform the gradient of (\ref{eq4.9}) into the form of NEPv to contract the matrix \(H(X)\). In Section 2, for the sparse PCA, a simpler form of \(H(X)\) can be derived instead of directly applying (\ref{eq3.7}) 
    \begin{equation}\label{eq4.10}
        H(X) = M - \beta I_{n} + D_{k}X^{T} + XD_{k}
    \end{equation}
    where \(D_{k} = \beta Y_{k} - \Lambda_{k}\).
    Next, for \(X , \hat{X} \in \mathcal{S}_{n,p}\), if 
    \begin{equation}\label{eq4.11}
        {\rm trace}(\hat{X}^{T}H(X)\hat{X}) \geq {\rm trace}(X^{T}H(X)X),
    \end{equation}
    Inserting (\ref{eq4.10}) into (\ref{eq4.11}), through simple derivation, the left-hand side of (\ref{eq4.11}) is
    \begin{equation}\label{eq4.12}
    \begin{split}
        {\rm trace}(\hat{X}^{T}H(X)\hat{X}) & = {\rm trace}(\hat{X}^{T}M\hat{X}) - \beta {\rm trace}(\hat{X}^{T}\hat{X}) + 2{\rm trace}(\hat{X}^{T}D_{k}X^{T}\hat{X}) \\
        & \leq {\rm trace}(\hat{X}^{T}M\hat{X}) - \beta {\rm trace}(\hat{X}^{T}\hat{X}) + \Vert \hat{X}^{T}D_{k} \Vert_{\rm trace} \\
        & = {\rm trace}(\Tilde{X}^{T}M\Tilde{X}) - \beta {\rm trace}(\Tilde{X}^{T}\Tilde{X}) + {\rm trace} (\Tilde{X}^{T}D_{k})
    \end{split}
    \end{equation}
    where the inequality within (\ref{eq4.12}) is done by von Neumann's trace inequality and \(\Tilde{X} = \hat{X}(UV^{T})\) in the last equality is defined through singular value decomposition (SVD) \(\hat{X}^{T}D_{k} = U\Sigma V^{T}\).
    Similarly, the right-hand side of (\ref{eq4.11}) has the form of
    \begin{equation}\label{eq4.13}
        {\rm trace}(X^{T}H(X)X) = {\rm trace}(X^{T}MX) - \beta {\rm trace}(X^{T}X) + 2{\rm trace}(X^{T}D_{k})
    \end{equation}
    Adding the term \({\rm trace}(Y_{k}^{T}\Lambda_{k}) - \beta {\rm trace}(Y_{k}^{T}Y_{k})\) on both side of (\ref{eq4.12}) and (\ref{eq4.13}), the inequality (\ref{eq4.11}) can be transformed into the form of (\ref{eq4.9})
    \begin{equation}\label{eq4.14}
        g_{k}(\Tilde{X}) \geq g_{k}(X)
    \end{equation}
    which shows that the ansatz is satisfied.
\end{proof}

The data matrix \(A \in \mathbb{R}^{n \times p} := \mathbf{randn}(n,p)\) is randomly generated, where \(n\) is chosen from \(\{500,1000\}\), \(p\) from \(\{5,10\}\). We set \(\beta = 20\)  and \(\mu\) from \(\{0.5,1\}\). The stopping criterion is defined as 
$$ \frac{|F(Y_{k+1}) - F(Y_{k})|}{|F(Y_{k})|} < 10^{-8} $$
where \(F(\cdot) := f(\cdot) + r(\cdot)\). 

\subsubsection{Numerical Results}
We compare the performance of our NEPvADMM with the Riemannian subgradient descent method and the MADMM approach. The following Table 1 presents the results from three aspects: the objection value and the sparisty, where the sparsity formula is calculated by the percentage of the zero element within the matrix. From the table, we can see that the Riemannian subgradient method is unable to iteration a sparse result while the NEPvADMM and MADMM can, and the sparisty are very high both the latter two methods. Besides, the objective value generated by our approach and MADMM are very close, however, the ones derived by RSG is high compared to the others.  

\begin{table}
\caption{Results comparison between NEPvADMM, Riemannian subgradient method and MADMM from the aspects of objection value and sparsity}
  \begin{tabular}{SSSSSSSS}
    \toprule
     \hline
      \multicolumn{2}{c}{Setting} &
      \multicolumn{2}{c}{NEPvADMM} &
      \multicolumn{2}{c}{RSG} &
      \multicolumn{2}{c}{MADMM} \\
       {\(\mu\)} & {(n,p)} & {Obj.} & {Spa.} & {Obj.} & {Spa.} & {Obj.} & {Spa.} \\
       \hline
       \midrule
       0.5 & {(300,5)} & 1.1580 & 0.9967 & 3.0364 & 0 & 1.1786 & 0.9967 \\
       0.5 & {(300,50)} & 12.5907 & 0.9967 & 30.7831 & 0 & 12.5042 & 0.9960 \\
       0.5 & {(500,5)} & 1.2330 & 0.9980 & 4.3290 & 0 & 1.2330 & 0.9980 \\
       0.5 & {(500,50)} & 12.6830 & 0.9980 & 43.6577 & 0 & 12.6816 & 0.9980 \\
     \hline
     \midrule
       1 & {(300,5)} & 3.7739 & 0.9967 & 10.7888 & 0 & 3.7739 & 0.9967 \\
       1 & {(300,50)} & 36.9932 & 0.9967 & 110.0859 & 0 & 37.1584 & 0.9967 \\
       1 & {(500,5)} & 3.7146 & 0.9980 & 11.5442 & 0 & 3.7146 & 0.9980\\
       1 & {(500,50)} & 38.0765 & 0.9980 & 157.2606 & 0 & 38.1054 & 0.9980\\
     \hline
    \bottomrule
  \end{tabular}
\end{table}

To have a better presentation of the efficiency of these three methods, Figure 1 shows how the objective value changes along with iteration counts as well as the CPU times. From Figure 1, the objective value of NEPvADMM and MADMM are similar while the RSG (blue line) performs badly, and it is clear that our approach requires much lower iteration counts as well as CPU time to converge in both four cases. 

\begin{figure}
     \centering
     \begin{subfigure}[b]{0.45\textwidth}
         \centering
         \includegraphics[width=\textwidth]{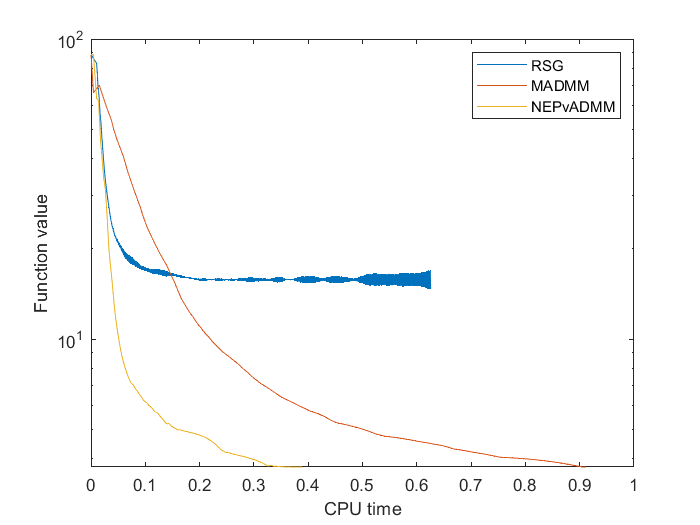}
         \caption{n = 500, p = 5}
     \end{subfigure}
     \hfill
     \begin{subfigure}[b]{0.45\textwidth}
         \centering
         \includegraphics[width=\textwidth]{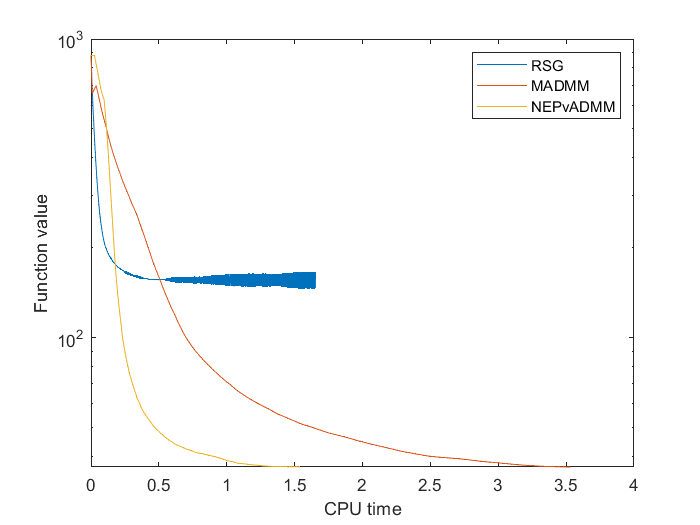}
         \caption{n = 500, p = 50}
     \end{subfigure}
     \hfill
     \begin{subfigure}[b]{0.45\textwidth}
         \centering
         \includegraphics[width=\textwidth]{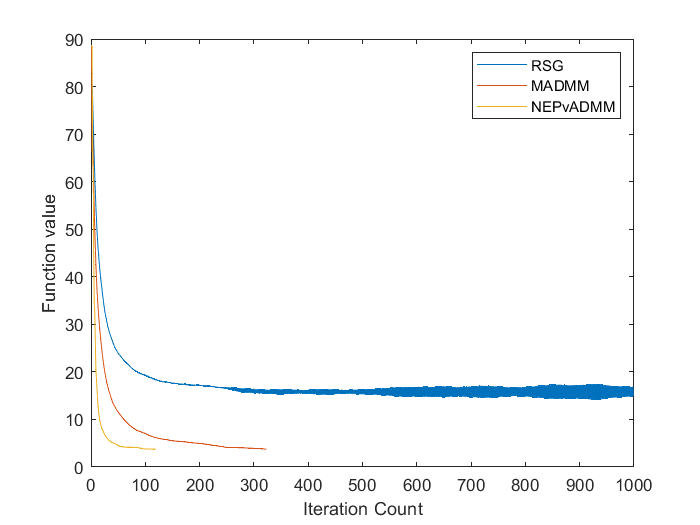}
         \caption{n = 500, p = 5}
     \end{subfigure}
     \hfill
     \begin{subfigure}[b]{0.45\textwidth}
         \centering
         \includegraphics[width=\textwidth]{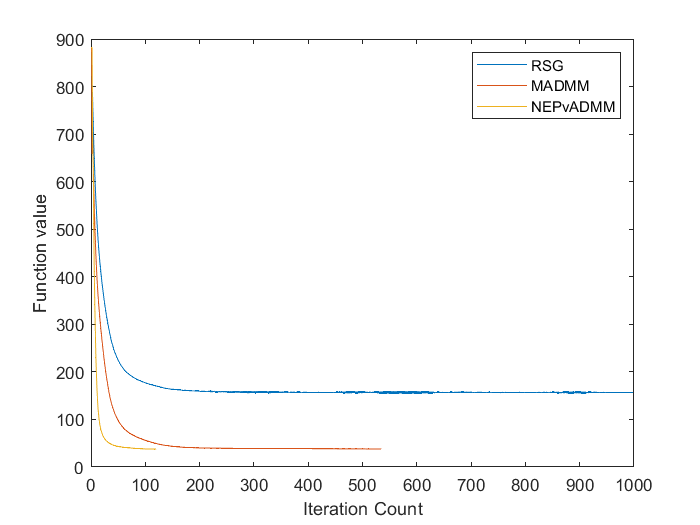}
         \caption{n = 500, p = 50}
     \end{subfigure}
        \caption{Comparison of the iteration counts required and the CPU time (in seconds) consumed among the NEPvADMM, Riemannian gradient method, and the MADMM approach for \(\mu = 1\)}
        \label{fig 1}
\end{figure}

\section{Conclusion}
In this paper, we introduced an innovative method to address the non-smooth composite optimization on Stiefel manifold by employing a nonlinear eigen-approach (NEPv) together with alternating direction method of multipliers, which fully utilize the unique properties from the problem such as dimension reduction as well as sparsity, has great advantages compared to the limited existing algorithms, and can be applied to many applications from machine learning and data science. The steps from our algorithm in straightforward and easy to implement. It can be customized to specific problems to further improve the efficiency, and requires much smaller iteration counts to converge compared to other existing approaches. We have strong belief in our method for solving the non-smooth problems with orthogonal constraint. In future work, we will not only conducted detailed mathematical analysis on the convergence property of the algorithm, but also study and generalize the conditions for objective functions where the NEPv ansatz is guaranteed to hold.

\bibliographystyle{ieeetr}
\bibliography{reference}

\end{document}